\newfont{\bcb}{msbm10}
\newfont{\matb}{cmbx10}
\newfont{\got}{eufm10}
\newtheorem{theorem}{Theorem}[section]
\newtheorem{proposition}[theorem]{Proposition}
\newtheorem{corollary}[theorem]{Corollary}
\theoremstyle{definition}
\newtheorem{example}[theorem]{Example}
\newtheorem{remark}[theorem]{Remark}
\numberwithin{equation}{section}
\begin{document}

\title[Pulling back singularities]{Pulling back singularities \\ for analytic complete intersections}

\author[Krzysztof Jan Nowak]{Krzysztof Jan Nowak}


\subjclass[2000]{Primary 32S05, 32S65.}

\keywords{}

\date{}

\begin{abstract}
The following pullback problem will be considered. Given a finite holomorphic map germ $\phi : (\mathbb{C}^{n}, 0) \to (\mathbb{C}^{n}, 0)$ and an analytic germ $X$ in the target, if the preimage $Y = \phi^{-1}(X)$, taken with the reduced structure, is smooth, so is $X$. The main aim of this paper is to give an affirmative solution for $X$ being a geometric complete intersection. The case, where $Y$ is not contained in the ramification divisor $Z$ of $\phi$, was established by Ebenfelt--Rothschild (2007) and afterwards by Lebl (2008) and Denkowski (2016). The hypersurface case was achieved by Giraldo--Roeder (2020) and recently by Jelonek (2023).
\end{abstract}

\maketitle

\section{Introduction}

\vspace{1ex}

The main aim of this paper is to give an affirmative solution to the following pullback problem

\begin{theorem}\label{back}
Let $\phi : (\mathbb{C}^{n}, 0) \to (\mathbb{C}^{n}, 0)$ be a germ of a finite holomorphic map,
$(X,0) \subset (\mathbb{C}^{n}, 0)$ an analytic germ being a geometric complete intersection in the target, and $(Y,0)$ the preimage of $(X,0)$ under $\phi$, taken with the reduced structure. Then if $(Y,0)$ is smooth (non-singular), so is the germ $(X,0)$.
\end{theorem}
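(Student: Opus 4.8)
\section*{Proof proposal}

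The plan is to pass to the local analytic algebras and to use that a finite holomorphic map between smooth spaces of equal dimension is flat. Put $A=\mathcal{O}_{\mathbb{C}^{n},0}$ for the target and $B=\mathcal{O}_{\mathbb{C}^{n},0}$ for the source, so that $\phi^{*}\colon A\to B$ is module-finite; since $A$ and $B$ are regular of the same dimension $n$ and the fibres of $\phi$ are $0$-dimensional, the local flatness criterion (``miracle flatness'') shows $\phi^{*}$ is flat, hence $B$ is free over $A$ of rank $m:=\deg\phi$. Let $k=n-\dim X$ and fix a presentation $I(X)=(f_{1},\dots,f_{k})$ of the radical ideal of $X$ (this is the hypothesis that $X$ is a geometric complete intersection), so $A/I(X)$ is a reduced complete intersection, in particular Cohen--Macaulay. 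By flatness the ideal $\mathfrak{b}:=(\phi^{*}f_{1},\dots,\phi^{*}f_{k})\subset B$ again has height $k$, so $B/\mathfrak{b}$ is a complete intersection ring, hence Cohen--Macaulay and unmixed; as its reduction $\mathcal{O}_{Y}=B/\sqrt{\mathfrak{b}}$ is regular, hence a domain, $Y$ is irreducible and $\mathfrak{b}$ is $\mathfrak{q}$-primary for the prime $\mathfrak{q}:=\sqrt{\mathfrak{b}}$, with $B/\mathfrak{q}$ regular. Finally $\phi|_{Y}\colon Y\to X$ is finite and surjective (a finite holomorphic germ $(\mathbb{C}^{n},0)\to(\mathbb{C}^{n},0)$ is surjective), so $X$ too is irreducible.

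Restricting $\phi$ over a neighbourhood of the generic point of $Y$ --- i.e.\ localising at $\mathfrak{q}$, which is the unique prime of $B$ lying over $I(X)$ since $B/\mathfrak{b}$ has a single minimal prime --- one finds that $A_{I(X)}\to B_{\mathfrak{q}}$ is finite flat between regular local rings of dimension $k$; the rank of $B_{\mathfrak{q}}$ over $A_{I(X)}$ equals $m$ and also equals $e\mu$, whence $m=e\mu$, where $e:=\operatorname{length}_{B_{\mathfrak{q}}}(B_{\mathfrak{q}}/\mathfrak{b}B_{\mathfrak{q}})$ and $\mu:=\deg(\phi|_{Y})$. Now (char $0$) $e=1$ if and only if $\phi$ is unramified at the generic point of $Y$, i.e.\ if and only if $Y\not\subseteq Z:=\{\det D\phi=0\}$. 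If $e=1$ then $B/\mathfrak{b}$ is reduced (being Cohen--Macaulay and generically reduced), so $\mathfrak{b}=\mathfrak{q}=I(Y)$; choosing coordinates $u_{1},\dots,u_{n}$ on the source with $Y=\{u_{1}=\dots=u_{k}=0\}$ and writing $\phi^{*}f_{i}=\sum_{j=1}^{k}c_{ij}u_{j}$, the fact that the $\phi^{*}f_{i}$ minimally generate $\mathfrak{q}$ forces $(c_{ij}(0))_{i,j\le k}$ to be invertible; differentiating the identities $f_{i}\circ\phi=\sum_{j}c_{ij}u_{j}$ at $0$ and using the chain rule then shows the matrix $\bigl(\partial f_{i}/\partial x_{s}(0)\bigr)_{i\le k,\ s\le n}$ has rank $k$, i.e.\ $X$ is smooth. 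This recovers the result of Ebenfelt--Rothschild (and of Lebl and Denkowski).

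There remains the case $Y\subseteq Z$, where $\widetilde{Y}:=\operatorname{Spec}(B/\mathfrak{b})$ is genuinely non-reduced ($e\ge 2$) and the possible singularity of $X$ is masked by the ramification; this is the crux. Here I would descend, one codimension at a time, to the known hypersurface case (Giraldo--Roeder; Jelonek): choosing source coordinates with $Y=\{u_{1}=\dots=u_{k}=0\}$, the key step for $k\ge 2$ is to construct in the target a reduced complete intersection $\widehat{X}\supseteq X$ of codimension $k-1$ with $\operatorname{mult}_{0}\widehat{X}=\operatorname{mult}_{0}X$ and with $\phi^{-1}(\widehat{X})_{\mathrm{red}}$ smooth of codimension $k-1$. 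Granting this, induction on $k$ (base case $k=1$: the hypersurface theorem) gives that $\widehat{X}$ is smooth; then $X$ is a reduced hypersurface in the smooth germ $\widehat{X}\cong(\mathbb{C}^{n-k+1},0)$, the induced map $\phi^{-1}(\widehat{X})_{\mathrm{red}}\to\widehat{X}$ is a finite holomorphic germ between smooth $(n-k+1)$-folds whose reduced preimage of $X$ equals $Y$, and the hypersurface case applied inside $\widehat{X}$ finishes the proof. I expect the construction of $\widehat{X}$ to be the main obstacle: the existence of a reduced complete intersection $\widehat{X}\supseteq X$ of codimension $k-1$ not increasing $\operatorname{mult}_{0}$ follows from a Bertini-type analysis of the linear system of hypersurfaces through $X$, but the smoothness of $\phi^{-1}(\widehat{X})_{\mathrm{red}}$ must be forced out of the smoothness of $Y$ together with the Cohen--Macaulayness and unmixedness of scheme-theoretic preimages of complete intersections --- and this is precisely the point where the complete intersection hypothesis on $X$ is used in an essential way.
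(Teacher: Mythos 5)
Your first paragraph (the case $Y \not\subset Z$) is correct and coincides in substance with the paper's own warm-up argument: miracle flatness, the multiplicity identity $m = e\mu$, and the conclusion that $\mathfrak{b}=I(Y)$, from which smoothness of $X$ follows (the paper concludes via faithfully flat descent of regularity, you via minimal generation of $I(Y)$ and the chain rule; both work). But this case was already known, and the theorem's content is precisely the case $Y\subseteq Z$, which your proposal does not prove. The entire weight of your argument rests on the construction of a reduced complete intersection $\widehat{X}\supseteq X$ of codimension $k-1$ whose \emph{reduced preimage} $\phi^{-1}(\widehat{X})_{\mathrm{red}}$ is smooth, and you explicitly leave this construction open. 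This is not a technical loose end: smoothness of $Y=\phi^{-1}(X)_{\mathrm{red}}$ gives no control whatsoever over $\phi^{-1}(\widehat{X})_{\mathrm{red}}$ for a strictly larger $\widehat{X}$, since the latter is a codimension-$(k-1)$ set containing $Y$ as a proper subvariety, and a generic member of the linear system of complete intersections through $X$ pulls back to something that can perfectly well be singular along or off $Y$. Cohen--Macaulayness and unmixedness of the scheme-theoretic preimage, which you invoke as the hoped-for mechanism, control embedded components and equidimensionality but say nothing about regularity. In effect the claimed reduction step is at least as hard as the theorem itself, so the proposal has a genuine gap at its central point.

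It is worth noting that the paper's induction runs in the opposite direction: the codimension $p$ of $X$ is kept fixed, and the \emph{ambient dimension} $n$ is lowered by intersecting with generic affine hyperplanes $H+a$ through well-chosen points $a$ of $\mathrm{Sing}(\mathcal{F}_X)$ (via a local analytic version of Kleiman's generic-translate theorem, plus a transversality argument guaranteeing that $Y\cap\phi^{-1}(H+a)$ stays smooth). This reduces matters to the case where the singular foliation $\mathcal{F}_X$ generated by $df_1\wedge\ldots\wedge df_p$ has an isolated singularity; that case is then handled directly by Saito's generalized de Rham lemma (division of $p$-forms annihilated by $\omega_X$) together with the fact that the Jacobian of a finite map germ $(\mathbb{C}^p,0)\to(\mathbb{C}^p,0)$ never lies in the ideal generated by its components. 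A key intermediate step, which your proposal lacks any analogue of, is that smoothness of $Y$ forces the reduced pullback foliation $\phi^{\#}(\mathcal{F}_X)$ to be non-singular -- proved by continuity of the level sets $Y_c$ and of analytic intersections. If you want to salvage a codimension-descent argument, you would need to replace the set-level object $\widehat{X}$ by something carrying the foliation data, since that is where the obstruction to smoothness of $X$ actually lives.
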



This problem, for an arbitrary (irreducible) analytic germ $X$, was first posed by Ebenfelt--Rothschild~\cite[Section~4]{ER} in relation with their study of the images of CR submanifolds under finite holomorphic maps. They gave an affirmative answer in the case where the Jacobian of the map $\phi$ does not vanish on the analytic germ $Y$ (thus if $Y$ is not contained in the ramification divisor $Z$ of $\phi$) and where $(X,0)$ is a curve.

\vspace{1ex}

Afterwards some partial solutions were established in a different way by Lebl~\cite{L} and Denkowski~\cite{Den} under some extra assumptions, including that on the Jacobian of the map $\phi$.

\vspace{1ex}

For hypersurfaces (codimension one germs) this problem was eventually solved by Giraldo--Roeder~\cite{GR} (who analyzed the singular holomorphic foliation of codimension one associated to $X$, and some Koszul complex) and recently by Jelonek~\cite{Jel} (by means of Milnor numbers).


\vspace{1ex}

First we give a short proof of the special case where the set $Y$ is not contain in the ramification divisor $Z$. It relies on the following two subtle results:

\vspace{1ex}

\begin{em}
1) Suppose a local ring $B$ is a finite module over a regular local ring $A$. Then $B$ is Cohen--Macaulay iff it is a free (equivalently, flat) $A$-module. In particular, $B$ is a free $A$-module if it is regular.
\end{em}

\vspace{1ex}

\begin{em}
2) Suppose  a local ring $B$ is a faithfully flat over a local ring $A$. Then if $B$ is regular, so is $A$.
\end{em}

\vspace{1ex}

The former result, sometimes called miracle flatness, is attributed to Hironaka (1962, his unpublished master's thesis, cf.~\cite[Theorem~25.16, Appendix~A.2, Historical note]{Na}), or sometimes to Palamodov (1967, for the rings of convergent power series). It can be also obtained by means of the Auslander--Buchsbaum formula relating depth and projective dimension, thus relying on homological algebra (syzygies). The latter, in turn, follows from Serre's homological criterion for a ring to be regular, relying as well on homological methods, including Koszul complexes and functors Tor (see e.g.~\cite[Chapter~8, Theorem~51]{Ma}).

\vspace{2ex}

{\em Proof of the case where $Y \not \subset Z$.} The map
$$ \phi: (\mathbb{C}^{n}_{y},0) \to (\mathbb{C}^{n}_{x},0), \ \ x=(x_1,\ldots,x_n), \ y=(y_1,\ldots,y_n) $$
induces a finite local ring homomorphism
$$ \phi^{*}: A := \mathbb{C}\{ y \} \to B := \mathbb{C}\{ x \}. $$
By result~1), $B$ is a free $A$-module of a rank $d$. Let $\mathfrak{a} \subset A$ and $\mathfrak{b} \subset B$ be the ideals of the analytic set germs $X$ and $Y$, respectively. Then $A/ \mathfrak{a} \subset B/\mathfrak{a}B$ and $B/\mathfrak{a}B$ is a finite free module over $A/ \mathfrak{a}$ of rank $d$. If $Y \not \subset Z$, the map germ $\phi$ and its restriction $\phi|Y: Y \to X$ have the same multiplicity. Hence and by Chevalley's multiplicity formula, we get
$$ d = [B:A] = [B/\mathfrak{b} : A/\mathfrak{a}]; $$
here $[B:A]$ denotes the maximal number of elements from $B$ which are linearly independent over $A$. Since $B/\mathfrak{a}B$ is a free module over $A/\mathfrak{a}$ and $d = [B/\mathfrak{a}B : A/\mathfrak{a}]$, it follows that $\mathfrak{b} = \mathfrak{a}B$, and thus $B/\mathfrak{b}$ is a free module over $A/\mathfrak{a}$ too. By result~2, the local ring $A/\mathfrak{a}$ is regular too. This means that the analytic set germ $X$ is smooth, as desired.  \hspace*{\fill} $\Box$

\vspace{2ex}

Dealing with geometric complete intersections of an arbitrary codimension $p$, we shall apply the theory of singular complex analytic foliations. For its rudiments, we refer the reader e.g.\ to the paper~\cite{Yo}. What is crucial for our approach is that the singular foliation of codimension $p$ under study can be associated with a holomorphic $p$-form. Note also that an approach to singular analytic foliations via differential $p$-forms goes back to the paper~\cite{Med}.

\vspace{1ex}

Some essential ingredients of our approach are, among others, the following three results recalled below for the reader's convenience:

\vspace{1ex}



1) a local complex analytic version of Kleiman's theorem on a generic translate (cf.~\cite[Theorem~2]{Kle}, see also~\cite{Sp}), which allows us to proceed with induction;

\vspace{1ex}

2) Saito's generalized version of the theorem on Koszul cohomology (cf.~\cite{Sai}).

\vspace{1ex}

3) Continuity of intersections of analytic cycles, as developed by Tworzewski--Winiarski~\cite{T-W,T} and Chirka~\cite[\S 12 and \S 15]{Ch}.

\vspace{1ex}

Kleiman's theorem can be regarded as a generalization of the classical Bertini theorem.
His proof relies on some results from Grothendieck's EGA~\cite{Gro} formulated for general morphisms between locally noetherian schemes or for morphism locally of finite type.

\begin{theorem}\label{Kle} (Kleiman)
Consider an integral algebraic group scheme $G$ and an integral homogeneous $G$-space $X$ (i.e.\ an
algebraic scheme with a transitive $G$-action). Let $f: Y \to X$ and $g: Z \to X$
be two maps of integral algebraic schemes. For each rational point $s$ of $G$,
let $s Y$ denote $Y$ considered as an $X$-scheme via the map $y \mapsto sf(y)$.

1) Then, there exists a Zariski dense and open subset $U \subset G$ such that, for each rational
point $s \in U$, either the fibred product $(sY) \times_{X} Z$ is empty, or it is
equidimensional with the expected dimension;

2) Moreover, if the characteristic is zero, and $Y$ and $Z$ are regular (reduced), then there exists a Zariski dense and open subset $U \subset G$ such that, for each rational point $s \in U$, the fibred product $(sY) \times_{X} Z$ is regular (reduced), and the translate
map $(sf)$ is transversal to $g$.  \hspace*{\fill} $\Box$
\end{theorem}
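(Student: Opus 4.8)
The plan is to deduce both parts from a single ``universal'' incidence variety over the group. I would set
\[
W := (G\times Y)\times_X Z = \{\,(s,y,z)\in G\times Y\times Z \ :\ s\cdot f(y) = g(z)\,\},
\]
where $G\times Y\to X$ is the twisted evaluation $(s,y)\mapsto s\cdot f(y)$ and $Z\to X$ is $g$. It carries two projections: $\pi\colon W\to G$, whose fibre over a rational point $s$ is precisely the $X$-fibre product $(sY)\times_X Z$ that must be controlled, and $\rho\colon W\to Y\times Z$, $(s,y,z)\mapsto(y,z)$. The whole proof turns on $\rho$ inheriting strong regularity properties from the homogeneity of the action, after which $\pi$ is analysed by generic flatness and, in characteristic zero, generic smoothness over the integral base $G$.

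\emph{The homogeneity lemma.} Introduce the twisted-action morphism $b\colon G\times X\to X\times X$, $b(s,x) = (s\cdot x,\,x)$. Since the orbit maps $G\to X$ are the faithfully flat quotient morphisms $G\to G/\operatorname{Stab}(x_0) = X$ of a homogeneous space, a faithfully flat descent argument — for instance, descending the shear automorphism $(s,t)\mapsto(st,t)$ of $G\times G$ along the quotient maps $G\times G\to G\times X$ and $G\times G\to X\times X$ — shows that $b$ is faithfully flat, and even smooth once $G$ is smooth (automatic in characteristic zero by Cartier's theorem). A direct computation, substituting $s\mapsto s^{-1}$, identifies $W$ with the fibre product of $b$ and of $(f,g)\colon Y\times Z\to X\times X$; hence $\rho$ is the base change of $b$, and is therefore faithfully flat, and smooth in characteristic zero. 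As each fibre of $\rho$ is a coset of a point-stabiliser, it is equidimensional of dimension $\dim G-\dim X$, so $W$ is non-empty and equidimensional of dimension $\dim Y+\dim Z+\dim G-\dim X$; moreover, in characteristic zero $W$ is regular (being smooth over $Y\times Z$) when $Y,Z$ are regular, and reduced when $Y,Z$ are merely reduced.

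\emph{From the universal family to the translates.} For part 1, look at $\pi\colon W\to G$ with $G$ integral. If $\pi$ is not dominant, take $U := G\setminus\overline{\pi(W)}$, a dense open over which every $(sY)\times_X Z$ is empty. If $\pi$ is dominant, generic flatness provides a dense open $U\subset G$ over which $\pi$ is flat; flatness together with the equidimensionality of $W$ forces each fibre over $U$ to be equidimensional of dimension $\dim W-\dim G = \dim Y+\dim Z-\dim X$, i.e. the expected dimension. For part 2, assume the characteristic is zero and $Y,Z$ regular; then $W$ and $G$ are smooth varieties, so generic smoothness yields a dense open $U\subset G$ with $\pi^{-1}(U)\to U$ smooth, whence each $(sY)\times_X Z$ with $s\in U$ is regular. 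Finally, smoothness of this fibre product (of the expected dimension) over $s$ is exactly the transversality of the translate $sf\colon sY\to X$ to $g\colon Z\to X$ (note that $X=G/\operatorname{Stab}$ is itself smooth in characteristic zero), and the identical argument with ``geometrically reduced'' replacing ``smooth'' gives the reduced version when $Y,Z$ are only reduced.

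\emph{Main obstacle.} The heart of the matter, and the point at which Grothendieck's EGA is genuinely needed, is the homogeneity lemma: proving that $b$ — equivalently $\rho$ — is faithfully flat in arbitrary characteristic without any smoothness hypothesis on $G$, and smooth (resp. geometrically reduced) in characteristic zero, through faithfully flat descent and the structure theory of quotients $G/H$. Everything else is the standard Bertini-type packaging: generic flatness for part 1, and generic smoothness for part 2 — which is why the characteristic-zero hypothesis is unavoidable there — together with the customary minor care about the geometric integrality of $Y\times_k Z$ over a non-closed field.
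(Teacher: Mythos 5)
The paper offers no proof of this statement: it is recalled from Kleiman's 1974 article \cite{Kle} and closed with a $\Box$, the text only remarking that Kleiman's argument rests on results from EGA. Your proposal is, in outline, precisely Kleiman's original proof: form the universal incidence scheme $W=(G\times Y)\times_X Z$ over $G$, use homogeneity to show that the second projection $\rho\colon W\to Y\times Z$ is faithfully flat (smooth in characteristic zero) with fibres of dimension $\dim G-\dim X$, conclude that $W$ is equidimensional (resp.\ regular, reduced) of dimension $\dim Y+\dim Z+\dim G-\dim X$, and finish with generic flatness (resp.\ generic smoothness) applied to $\pi\colon W\to G$. So the route is the standard one and the architecture is sound. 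Two steps are stated more confidently than they are justified. First, the faithful flatness of $b\colon G\times X\to X\times X$ is exactly the point where EGA is needed, and your appeal to ``the orbit map is the fppf quotient $G\to G/\mathrm{Stab}(x_0)$'' assumes more than the hypothesis (only that $X$ is an integral scheme with a transitive action); the cleaner and standard repair is Kleiman's: apply generic flatness to $(s,x)\mapsto s\cdot x$ over $X$, translate the flat locus around by equivariance, and invoke the fibrewise criterion of flatness. Second, the ``reduced'' variant of part 2 is not literally ``the identical argument with geometrically reduced replacing smooth'': generic smoothness has no meaning for a merely reduced $W$, and one must instead use that in characteristic zero the generic fibre of the dominant map $\pi$ from a reduced scheme is geometrically reduced, together with the constructibility of the locus of geometrically reduced fibres. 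With these two points filled in, your proof is correct and coincides with the one the paper cites.
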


Also available is a complex analytic counterpart, where $G$ is a complex algebraic group, $X$ is a complex homogeneous $G$-space, $f:Y \to X$ and $g: Z \to X$ are two finite maps of complex analytic spaces. The result in the complex analytic case can be established by means of some transversality arguments and Sard's theorem, which yields a subset $U$ as above such that $G \setminus U$ is a nowhere dense algebraic subset (of measure zero) of $G$.

\begin{remark}\label{Kle-an}
In the local complex analytic case one should take any non-empty, open (in the Euclidean topology) subset $G_{0}$ of $G$ such that every translate $(sY)$, $s \in G_{0}$, meets $Z$. Then the corresponding local analytic version yields a subset $U$ which is the complement in $G_{0}$ of a nowhere dense algebraic subset.
\end{remark}

\vspace{1ex}

Before stating the second result, we adopt some notation. Consider a commutative noetherian ring $R$ with unit, and a free $R$-module $\Omega$ of a finite rank $n$ with basis $e_1,\ldots,e_n$. Let $\omega_{1},\ldots,\omega_{k} \in \Omega$, $k \in \{ 1,\ldots,n \}$,
$$ \omega_{1} \wedge \ldots \wedge \omega_{k} = \sum_{1 \leq i_{1} < \ldots < i_{k} \leq n} \, a_{i_{1},\ldots,i_{k}} \, e_{i_{1}} \wedge \ldots \wedge e_{i_{k}}, $$
and let $\mathfrak{a}$ be the ideal generated by the coefficients
$$ a_{i_{1},\ldots,i_{k}}, \ \ \ 1 \leq i_{1} < \ldots < i_{k} \leq n. $$
For $p \in \{0,1,\ldots,n \}$, put
$$ Z^{p} := \left\{ \omega \in \bigwedge^{p} \Omega : \; \omega \wedge \omega_{1} \wedge \ldots \wedge \omega_{k} = 0 \right\} $$
and
$$ H^{p} := \frac{Z^{p}}{\sum_{i=1}^{k} \, \omega_{i} \wedge \bigwedge^{p-1} \, \Omega}. $$

\begin{theorem}\label{RS} (Saito)
Under the above assumptions, there exists a non-negative integer $m$ such that
$$ \mathfrak{a}^{m} H^{p} =0 \ \ \ \text{for} \ \ p=0,1, \ldots,n. $$
Further, $H^{p} =0$ for all $p < \mathrm{depth}_{\mathfrak{a}} (R)$.  \hspace*{\fill} $\Box$
\end{theorem}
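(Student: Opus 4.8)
The plan is to establish the two assertions separately: the annihilation statement $\mathfrak a^m H^p=0$ by a localization argument, and the vanishing statement by induction on $\mathrm{depth}_{\mathfrak a}(R)$, feeding in the first statement.

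For the first assertion I would argue as follows. Each $H^p$ is a finitely generated $R$-module, being a subquotient of the free module $\bigwedge^p\Omega$, so it is enough to show $\mathrm{Supp}_R(H^p)\subseteq V(\mathfrak a)$: then $\mathfrak a\subseteq\sqrt{\mathrm{Ann}_R(H^p)}$, and since $\mathfrak a$ is finitely generated some power $\mathfrak a^{m_p}$ already annihilates $H^p$, whence $m:=\max_{0\le p\le n}m_p$ works. To get the support inclusion, fix a prime $\mathfrak p$ with $\mathfrak a\not\subseteq\mathfrak p$. Then some coefficient $a_{i_1\dots i_k}$ becomes a unit of $R_{\mathfrak p}$, and since $a_{i_1\dots i_k}$ is, up to sign, the $k\times k$ minor on rows $i_1,\dots,i_k$ of the matrix of $\omega_1,\dots,\omega_k$ with respect to $e_1,\dots,e_n$, the vectors $\omega_1,\dots,\omega_k$ form part of a basis of $\Omega_{\mathfrak p}$, say $\omega_1,\dots,\omega_k,e'_{k+1},\dots,e'_n$. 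Expanding in this basis one checks immediately that both $Z^p_{\mathfrak p}$ and $\sum_i\omega_i\wedge\bigwedge^{p-1}\Omega_{\mathfrak p}$ equal the span of the degree-$p$ basis monomials that involve at least one of $\omega_1,\dots,\omega_k$; hence $H^p_{\mathfrak p}=0$.

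For the vanishing statement I would induct on $d:=\mathrm{depth}_{\mathfrak a}(R)$. If $\mathfrak a=R$ then $H^p=0$ for all $p$ by the first part, and the case $d\le 1$ is immediate because $H^0=\{\,r\in R:\ r\theta=0\,\}$ is killed by any non-zerodivisor in $\mathfrak a$. So suppose $\mathfrak a\ne R$ and $d\ge 2$. Using prime avoidance, choose an $R$-regular $h\in\mathfrak a$, and with $m$ as above set $g:=h^m$; then $g$ is $R$-regular, $g\in\mathfrak a^m$ (so $g$ annihilates every $H^p(R)$), $g\in\mathfrak a$, and $\mathrm{depth}_{\mathfrak a}(R/gR)=d-1$, so the inductive hypothesis applied to $R/gR$ yields $H^p(R/gR)=0$ for $p<d-1$. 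Now comes the key point: $H^\bullet$ is the cohomology of a bounded complex $C^\bullet$ of free $R$-modules attached functorially to the data $(R,\Omega,\omega_1,\dots,\omega_k)$ --- concretely, the Koszul-type complex built from the decomposable form $\theta$ that underlies Saito's theorem; since $\mathfrak a$ is the ideal of maximal minors of the map $v\colon R^k\to\Omega$, $\varepsilon_i\mapsto\omega_i$, this $C^\bullet$ is of Eagon--Northcott / Buchsbaum--Rim type. Granting it, freeness of the $C^q$ makes $0\to C^\bullet\xrightarrow{\,g\,}C^\bullet\to C^\bullet\otimes_R R/gR\to 0$ exact, with $C^\bullet\otimes_R R/gR$ computing $H^\bullet(R/gR)$, and since $g$ acts as $0$ on $H^\bullet(C^\bullet)=H^\bullet(R)$ the long exact cohomology sequence breaks into short exact pieces $0\to H^p(R)\to H^p(R/gR)\to H^{p+1}(R)\to 0$. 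Reading these off, $H^p(R/gR)=0$ for $p<d-1$ forces $H^p(R)=0$ for all $p\le d-1$, i.e.\ for all $p<d$, which closes the induction.

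The step I expect to be the main obstacle --- and which is really the substance of Saito's theorem --- is the construction of the functorial free complex $C^\bullet$ together with the identification of its cohomology with $H^\bullet$ (equivalently, the proof that it is exact in the relevant range). The natural way to carry this out is by generic perfection: verify the claim in the universal situation, where $R$ is a polynomial ring over $\mathbb Z$ and the entries of $v$ are independent indeterminates, so that $\mathfrak a=I_k(v)$ is a determinantal ideal of the expected depth and $C^\bullet$ is acyclic onto its zeroth homology; then transfer acyclicity to an arbitrary specialization by the Buchsbaum--Eisenbud acyclicity criterion, the inequality $p<\mathrm{depth}_{\mathfrak a}(R)$ being precisely what keeps enough of the complex exact. (Alternatively, closer to Saito's original argument, one can induct on $k$ starting from the classical case $k=1$ of ordinary Koszul cohomology, splitting off $\omega_1$ from $\theta=\omega_1\wedge(\omega_2\wedge\dots\wedge\omega_k)$ and tracking the effect of $\,\cdot\wedge\omega_1$ on $Z^p$ and $B^p$.)
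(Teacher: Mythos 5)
First, a remark on the comparison you asked for: the paper does not prove Theorem~\ref{RS} at all --- it is quoted from Saito's paper \cite{Sai} and closed with a box --- so your proposal has to be judged on its own merits. Your first half stands up: $H^p$ is a finitely generated module over a noetherian ring, a unit $k\times k$ minor at a prime $\mathfrak p\not\supseteq\mathfrak a$ does make $\omega_1,\dots,\omega_k$ part of a basis of $\Omega_{\mathfrak p}$, in such a basis $Z^p_{\mathfrak p}$ and $\sum_i\omega_i\wedge\bigwedge^{p-1}\Omega_{\mathfrak p}$ visibly coincide, and formation of $Z^p$ and of the boundary submodule commutes with localization since $Z^p$ is the kernel of a map of finitely generated modules. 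So $\mathrm{Supp}(H^p)\subseteq V(\mathfrak a)$, and finite generation of $\mathfrak a$ gives $\mathfrak a^{m}H^p=0$. That part is complete and correct.

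The depth statement, however, has a genuine gap, and it is exactly the one you flag yourself: everything rests on the existence of a bounded complex $C^\bullet$ of \emph{free} $R$-modules, compatible with the base change $R\to R/gR$, whose cohomology in degree $p$ is $Z^p/B^p$ for \emph{every} $p$ simultaneously. For $k=1$ such a complex exists, namely $(\bigwedge^\bullet\Omega,\ \cdot\wedge\omega_1)$, and your induction then reproduces the classical de Rham lemma. For $k\ge 2$ no such complex is exhibited, and the candidates you name do not fit: the Eagon--Northcott and Buchsbaum--Rim complexes attached to $v:R^k\to\Omega$ have neither $\bigwedge^p\Omega$ as terms nor $Z^p/B^p$ as cohomology. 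The obvious substitute, the three-term complex $(\bigwedge^{p-1}\Omega)^{\oplus k}\to\bigwedge^p\Omega\xrightarrow{\ \cdot\wedge\theta\ }\bigwedge^{p+k}\Omega$ whose middle cohomology is $H^p$, does not splice into a single complex as $p$ varies; running your multiplication-by-$g$ argument on it only exhibits $H^p(R)$ as a quotient of $\ker\bigl((\bigwedge^{p-1}\overline{\Omega})^{\oplus k}\to\bigwedge^p\overline{\Omega}\bigr)$, not of $H^{p-1}(R/gR)$, so the short exact sequences $0\to H^p(R)\to H^p(R/gR)\to H^{p+1}(R)\to 0$ are not obtained. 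Since the bookkeeping after that point is routine, the unconstructed complex is not a technical detail but the entire content of the theorem; as written, the proof of the statement $H^p=0$ for $p<\mathrm{depth}_{\mathfrak a}(R)$ is incomplete. The alternative you mention only in passing --- an induction on $k$ splitting off $\omega_1$ and reducing to the $k=1$ Koszul case --- is the more promising route, but it too is not carried out.
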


\vspace{1ex}

Finally, we remind the reader the concept from~\cite{T-W} of a (metrizable) topology of local uniform convergence on the family $\mathrm{Cl}(M)$ of all closed subsets of a metric space $M$. It is applied to analytic sets and, along with intersection indices, to positive analytic cycles, being then equivalent to the convergence in the sense of currents. That topology is generated by the sets
$$ \mathcal{U}(\Omega,K) := \{ F \in \mathrm{Cl} (M): \ F \cap K = \emptyset, \ F \cap \Omega \neq \emptyset \} $$
for all compact subsets $K \subset M$ and all open subsets $\Omega \subset M$.

\vspace{1ex}

Suppose now that $X$ is a geometric complete intersection of codimension $p$ in an open subset $U \subset \mathbb{C}^{n}$, i.e.\ $X = X_{0}$ is the zero locus of some $p$ holomorphic functions $f_{i}: U \to \mathbb{C}$, $i=1,\ldots,p$ of dimension $n-p$. The analytic level sets
$$ X_{c} = \{ x \in U: \  f(x) = c \}, \ \ f = (f_{1},\ldots,f_{p}), \ \
   c=(c_{1},\ldots,c_{p}),$$
remain geometric complete intersections for $c$ from a neighbourhood $T$ of $0 \in \mathbb{C}^{p}$. Replace the domain $U$ by $f^{-1}(T)$. By the Remmert open mapping theorem (cf.~\cite[Chapter~V, \S~6, Theorem~2]{Loj}), the map $f$ is open. Hence $f(\Omega)$ is an open subset of $T$ for any open subset $\Omega$ of $U$. It is thus not difficult to deduce the following fact, which will be used in the sequel.

\begin{proposition}\label{level}
Suppose a sequence of parameters $c_{\nu} \in T$, $\nu \in \mathbb{N}$, $\nu \geq 1$, tends to a parameter $c_{0} \in T$. Then the sequence of level sets $X_{c_{\nu}}$ tends to $X_{c_{0}}$ in the topology of local uniform convergence.    \hspace*{\fill} $\Box$
\end{proposition}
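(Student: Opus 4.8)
The plan is to reduce the assertion to the two standard semicontinuity conditions that characterise convergence in the Tworzewski--Winiarski topology, and to check each of them separately: the ``lower'' one by invoking the openness of $f$, and the ``upper'' one by a compactness argument resting on the continuity of $f$.

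First I would record the purely formal observation that, in order to prove $X_{c_{\nu}} \to X_{c_{0}}$, it suffices to verify:
\begin{enumerate}[(i)]
\item for every open $\Omega \subset U$ with $X_{c_{0}} \cap \Omega \neq \emptyset$ there is an index $\nu_{0}$ such that $X_{c_{\nu}} \cap \Omega \neq \emptyset$ for all $\nu \geq \nu_{0}$;
\item for every compact $K \subset U$ with $X_{c_{0}} \cap K = \emptyset$ there is an index $\nu_{0}$ such that $X_{c_{\nu}} \cap K = \emptyset$ for all $\nu \geq \nu_{0}$.
\end{enumerate}
Indeed, a basic open neighbourhood of $X_{c_{0}}$ in the given topology is a finite intersection $\bigcap_{j=1}^{r} \mathcal{U}(\Omega_{j},K_{j})$ with $X_{c_{0}} \cap \Omega_{j} \neq \emptyset$ and $X_{c_{0}} \cap K_{j} = \emptyset$ for each $j$; applying (i) and (ii) to each pair $(\Omega_{j},K_{j})$ and taking the largest of the finitely many indices so obtained shows that $X_{c_{\nu}}$ belongs to that neighbourhood for all large $\nu$.

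To verify (i), fix an open $\Omega \subset U$ and a point $x_{0} \in \Omega$ with $f(x_{0}) = c_{0}$. Since $f$ is open by the Remmert open mapping theorem, $f(\Omega)$ is an open neighbourhood of $c_{0}$ in $T$; as $c_{\nu} \to c_{0}$, we have $c_{\nu} \in f(\Omega)$ for all large $\nu$, and for each such $\nu$ there is $x_{\nu} \in \Omega$ with $f(x_{\nu}) = c_{\nu}$, that is, $x_{\nu} \in X_{c_{\nu}} \cap \Omega$. To verify (ii), assume it fails; then, passing to a subsequence, there are points $x_{\nu} \in K$ with $f(x_{\nu}) = c_{\nu}$, and by compactness of $K$ we may pass to a further subsequence with $x_{\nu} \to x_{*} \in K$. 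Continuity of $f$ then gives $f(x_{*}) = \lim_{\nu} f(x_{\nu}) = \lim_{\nu} c_{\nu} = c_{0}$, so $x_{*} \in X_{c_{0}} \cap K$, contradicting $X_{c_{0}} \cap K = \emptyset$. Hence both (i) and (ii) hold, and the proposition follows.

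The argument is elementary and I do not foresee a genuine obstacle; the only non-routine ingredient is the Remmert open mapping theorem used in step (i), and it is precisely here that the hypothesis matters --- each level set $X_{c}$, being a geometric complete intersection of pure codimension $p$, makes $f$ open, whereas for a general analytic germ the lower semicontinuity property (i) could break down.
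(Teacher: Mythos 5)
Your argument is correct and is essentially the proof the paper has in mind: the paper establishes the openness of $f$ via the Remmert open mapping theorem immediately before the proposition and leaves the deduction (your semicontinuity conditions (i) and (ii), the first from openness, the second from continuity and compactness) to the reader as "not difficult." No discrepancies to report.
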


\section{Singular complex analytic foliations}

\vspace{1ex}

Before turning to the main theorem, we recall some results about the singular set of a singular foliation from~\cite{Yo}, which will be used in its proof.
Denote by $\mathcal{O} =\mathcal{O}_{M}$, $\Theta = \Theta_{M}$, $\Omega = \Omega_{M}$ and $\Omega^{p} = \Omega_{M}^{p}$ the structure, tangent, cotangent sheaf and the sheaf of analytic $p$-forms on $M = \mathbb{C}^{n}$, respectively.
Singular complex analytic foliations can be treated both as integrable (involutive) coherent subsheaves $\mathcal{E}$ of the tangent sheaf $\Theta$ of vector fields, or integrable coherent subsheaves $\mathcal{F}$ of the cotangent sheaf $\Omega$ of 1-forms. Adopt the notation from~\cite{Yo} and denote by $\mathcal{E}^{a} \subset \Omega$ and $\mathcal{F}^{a} \subset \Theta$ the associated reduced foliations. By the reduction of the foliation $\mathcal{E}$ or $\mathcal{F}$ we mean the associated foliations $(\mathcal{E}^{a})^{a}$ or $(\mathcal{E}^{a})^{a}$, respectively.
Notice that a singular foliation is reduced iff it coincides with its reduction and iff its singular locus is at least of codimension $2$. The singular loci of a reduced singular foliation and the associated one coincide, so that one can equivalently investigate reduced foliations in terms of vector fields and in terms of 1-forms.
Henceforth we shall work only with reduced singular foliations.

\vspace{1ex}

The common singular loci of the foliations $\mathcal{E}$ and $\mathcal{F}$, related above, are the subsets where the corresponding sheaves are not locally free:
$$ \mathrm{Sing}\, (\mathcal{E}) = \{ x \in M: \ (\Omega_{M}/\mathcal{E})_{x} \ \ \text{is not} \ \ (\mathcal{O}_{M})_{x}\text{-free} \} $$
and
$$ \mathrm{Sing}\, (\mathcal{F}) = \{ x \in M: \ (\Omega_{M}/\mathcal{F})_{x} \ \ \text{is not} \ \ (\mathcal{O}_{M})_{x}\text{-free} \}. $$
Note that the rank of $\mathcal{E}$ or of $\mathcal{F}$ at a point $x \in M$, as defined in~\cite{Yo}, is equal $n$ minus the corank of $\Theta_{M}/\mathcal{E}$ or of $\Omega_{M}/\mathcal{F}$ at $x$, as defined in~\cite[Chapter~4, \S~4]{CAS}.
Clearly, these singular loci can be described by the formulae
$$ \{ x \in M: \ \dim_{\mathbb{C}} \, (\Omega_{M}/\mathcal{E})(x) > p \} = \{ x \in M: \ \dim_{\mathbb{C}} \, \mathcal{E}(x) < n-p \} $$
and
$$ \{ x \in M: \ \dim_{\mathbb{C}} \, (\Omega_{M}/\mathcal{F})(x) > p \} = \{ x \in M: \ \dim_{\mathbb{C}} \, \mathcal{F}(x) < n-p \}. $$
The generic (minimal) corank of $\mathcal{E}$ and $\mathcal{F}$ on $M$ is called their corank; and the generic (maximal) rank of $\mathcal{E}$ and $\mathcal{F}$ is called their rank (or dimension).

\vspace{1ex}

For each point $x \in M$, put
$$ \mathcal{E}(x) := \{ v(x): \ v \in \mathcal{E}_{x} \} $$
where $v(x)$ denotes the evaluation of the vector field germ $v$ at $x$; obviously, $\mathcal{E}(x)$ is a sub-vector space of the tangent space $T_{x}M$.

\vspace{1ex}

For $k \in \{ 0,1,\ldots, n-p \}$, put
$$ L^{(k)} := \{ x\ \in M: \ \dim_{\mathbb{C}}\, \mathcal{E}(x) = k \}, \ \  S^{(k)} := \{ x \in M: \ \dim_{\mathbb{C}} \mathcal{E}(x) \leq k \}, $$
and $L^{(-1)}=S^{(-1)}=\emptyset$ for convenience. We have
$$ L^{(k)} = S^{(k)} - S^{(k-1)}, \ \ S^{(k)}= \bigcup_{i=0}^{k}L^{(i)}; $$
moreover, all the above sets $S^{(k)}$ are analytic (cf.~\cite[Proposition~2.3]{Yo}). Whenever the foliation $\mathcal{E}$ is involutive, the canonical filtration
$$ \emptyset = S^{-1} \subset S^{0} \subset \ldots \subset S^{n-p-1} = \mathrm{Sing}\, (\mathcal{E}) \subset S^{n-p} = M $$
controls the tangent spaces $\mathcal{E}(x)$ of $\mathcal{E}$: $\mathcal{E}(x)$ is contained in the tangent cone of $S^{k}$ at $x$ if $x \in S^{(k)}$ (the tangency lemma, {\em loc.cit.}). Relying on the tangency lemma, one can achieve ({\em op.cit.}, Theorems~2.15 and~2.16) the existence of (singular) integral submanifolds and local analytical triviality along each (also singular) leaf.

\section{Reduced pullback of the induced singular foliation}

\vspace{1ex}

Let $X$ be a geometric complete intersection of codimension $p$ in an open subset $U \subset \mathbb{C}^{n}$ determined by some $p$ holomorphic functions
$$ f_{i}: U \to \mathbb{C}, \ \ i=1,\ldots,p. $$ Since we are interested in what happens near the origin, $U$ may be just a polydisk in $\mathbb{C}^{n}$.
The subsheaf of $U$ generated by the differentials $df_{1}, \ldots,df_{p}$ is a coherent sheaf, locally free of rank $p$ off a nowhere dense analytic subset of $U$ (cf.~\cite[Chapter~4, \S~4]{CAS}). It is integrable (cf.~\cite[Section~1]{Yo}), and thus is a singular complex analytic foliation on $U$, depending actually on the choice of holomorphic functions $f_{i}$. Then the reduction $\mathcal{F}_{X}$ of this foliation is induced by the reduction $\omega_{X}$ of the associated holomorphic $p$-form
$df_{1} \wedge \ldots \wedge df_{p}$.
Here the reduction of a holomorphic $p$-form $\omega$ is, by definition, a unique (up to a unit factor) holomorphic $p$-form obtained via dividing its coefficients by their common holomorphic factor.


\begin{remark}\label{dim}
In the sequel we may assume that $p \leq n-2$ since the case where $X$ is a curve has already been proven in~\cite{ER,L}.
\end{remark}

Turning to the pullback problem, denote by
$\mathcal{G} := \phi^{\#}(\mathcal{F}_{X})$
the reduced pullback of $\mathcal{F}_{X}$ under the finite map $\phi$, i.e.\ the reduction of the pullback $\phi^{*}(\mathcal{F}_{X})$ under $\phi$. The foliation $\mathcal{G}$ corresponds to a (unique up to a unit factor) holomorphic $p$-form $\omega_{Y}$ which is the reduction of the  pullback $\phi^{*}(\omega_{X})$.
Then the analytic level sets of the foliation $\mathcal{G}$:
$$ Y_{c} = \{ g = c \}, \ \ g = (g_{1},\ldots,g_{p}), \ c=(c_{1},\ldots,c_{p}), $$
where $g_{i} := f_{i} \circ \phi$, $i=1,\ldots,p$, are geometric complete intersections for parameters $c$ from the neighbourhood $T$ of $0 \in \mathbb{C}^{p}$, considered in Section~1.

\vspace{1ex}




Now we prove a key result which allows us to replace local analysis of an analytic germ $X$ with that of the singular foliation $\mathcal{F}_{X}$. The idea behind the proof is to make use of continuity of the above level sets in the topology of local uniform convergence (Proposition~\ref{level}) and of continuity of analytic intersections.

\begin{proposition}\label{basic}
Adopt the foregoing notation and the assumptions of the pullback problem. Then the reduced pullback $\mathcal{G} = \phi^{\#}(\mathcal{F}_{X})$ is a non-singular analytic foliation in the vicinity of the origin.
\end{proposition}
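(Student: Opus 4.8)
The plan is to prove that $\mathrm{Sing}\,(\mathcal{G})$ avoids a neighbourhood of the origin; since $\mathcal{G}$ is reduced, this set is closed, analytic and of codimension $\ge 2$, so it suffices to show $0\notin\mathrm{Sing}\,(\mathcal{G})$. Writing $dg_{1}\wedge\cdots\wedge dg_{p}=D\cdot\omega_{Y}$ with $D$ the common holomorphic factor and $\omega_{Y}$ the reduced defining $p$-form of $\mathcal{G}$, one has $\mathrm{Sing}\,(\mathcal{G})\subseteq\{\omega_{Y}=0\}$ (off that zero set a reduced integrable $p$-form is locally decomposable and non-vanishing, hence defines, by Frobenius, a genuine non-singular foliation). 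Thus everything reduces to verifying that $\omega_{Y}(0)\neq 0$.

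I would begin by exploiting that $Y=Y_{0}$ is a germ of a submanifold of dimension $n-p$ through the origin. Using the local complex-analytic Kleiman theorem (Theorem~\ref{Kle} together with Remark~\ref{Kle-an}), applied to the affine group of $\mathbb{C}^{n}$ acting on $\mathbb{C}^{n}$, I would pick a $p$-dimensional linear slice $L$ through $0$ that is transversal to $Y_{0}$ at $0$ (so, after shrinking, $Y_{0}\cap L=\{0\}$), transversal to a generic leaf of $\mathcal{G}$, and disjoint from $\mathrm{Sing}\,(\mathcal{G})$ away from the origin. Then $g|_{L}:L\to\mathbb{C}^{p}$ is a finite map germ, and its local degree $d$ at $0$ equals the intersection number at the origin of the cycle of the preimage scheme $\{g_{1}=\cdots=g_{p}=0\}=\phi^{-1}(X_{0})$ with $L$.

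The continuity input enters next. By Proposition~\ref{level} the level sets $Y_{c}$ tend to $Y_{0}$ in the topology of local uniform convergence as $c\to 0$; by the continuity of intersection indices of analytic cycles (Tworzewski--Winiarski and Chirka) the cycle $Y_{c}\cdot L$, taken inside a fixed small polydisc about $0$, has total mass near the origin eventually independent of $c$, equal to $d$. I would then bound $d$ by combining the finiteness of $\phi$ and of $\phi|_{Y}:Y\to X$ with Chevalley's multiplicity formula — exactly as in the proof of the case $Y\not\subset Z$ given above — and with the hypotheses that $X$ is a geometric complete intersection and that $Y$ is smooth, so as to conclude $d=1$. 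Once $d=1$, the finite map $g|_{L}$ is a biholomorphism near $0$, hence $dg_{1}|_{L},\ldots,dg_{p}|_{L}$ are linearly independent at $0$, so $dg_{1}\wedge\cdots\wedge dg_{p}$ does not vanish at $0$; consequently $D(0)\neq 0$ and $\omega_{Y}(0)\neq 0$, which is what we wanted.

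The step I expect to be the main obstacle is precisely this last identification of the multiplicity. The difficulty is that when $Y$ lies in the ramification divisor $Z$ the preimage $\phi^{-1}(X_{0})$ generally carries a non-reduced scheme structure along $Y$, so a priori the zero of $dg_{1}\wedge\cdots\wedge dg_{p}$ at the origin need not be divisorial; one must show, using that the \emph{reduced} preimage $Y$ is smooth and that $X$ is a geometric complete intersection, that this excess multiplicity is entirely ``vertical'', i.e. absorbed by the common factor $D$ rather than by $\omega_{Y}$. It is here that the convergence of the level sets $Y_{c}$ to the smooth $Y_{0}$ and the continuity of the intersection cycles $Y_{c}\cdot L$ have to be used in tandem: the smoothness of $Y_{0}$ pins down the limiting cycle and is what prevents $\omega_{Y}$ from acquiring a zero at the origin.
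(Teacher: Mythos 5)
There is a genuine gap, and it sits exactly where you predicted: the identification $d=1$. The number $d$ you introduce is the local intersection number at $0$ of the \emph{scheme} $\{g_1=\cdots=g_p=0\}=\phi^{-1}(X)$ with a transversal $p$-plane $L$, equivalently the local degree of $g|_L$. When $Y$ lies in the ramification divisor $Z$ (the only case not already covered by the argument reproduced in the introduction), this scheme is in general non-reduced along $Y$, so $d>1$; the Chevalley-formula step you invoke ``exactly as in the proof of the case $Y\not\subset Z$'' uses precisely the hypothesis $Y\not\subset Z$ to equate the multiplicities of $\phi$ and $\phi|Y$, and it is unavailable here. Concretely, for $\phi(y_1,y_2)=(y_1^2,y_2)$ and $f=x_1$ one has $Y=\{y_1=0\}$ smooth, $d=2$, and $dg=2y_1\,dy_1$ vanishing at the origin, while $\omega_Y=dy_1$ does not. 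So your asserted conclusion that $dg_1\wedge\cdots\wedge dg_p$ is non-zero at $0$ (equivalently $D(0)\neq 0$) is false in the relevant cases, even though the target statement $\omega_Y(0)\neq 0$ is true. Your closing paragraph correctly re-poses the problem as showing that the vanishing of $dg_1\wedge\cdots\wedge dg_p$ is entirely absorbed by the divisorial factor $D$, but no argument is given for that; saying that the convergence of $Y_c$ and the continuity of $Y_c\cdot L$ ``have to be used in tandem'' is a statement of intent, not a proof.

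For comparison, the paper's proof never touches the non-reduced scheme $\{g=0\}$ or any degree of $g$. It chooses coordinates with $Y_0=\{y_1=\cdots=y_p=0\}$ and uses Proposition~\ref{level} together with continuity of intersection indices of the \emph{reduced} nearby fibres $Y_{c}$ with affine $p$-planes $b_\nu+L$ to rule out, first, singular points of $Y_{c_\nu}$ accumulating on $Y_0$ (index $>1$ at a singular point versus index $1$ for the smooth limit) and, second, non-injectivity of the projection $\pi|Y_c$ (cardinality $>1$ versus $1$). This exhibits all nearby leaves $Y_c$ as graphs $\theta_c:Y_0\to\mathbb{C}^p$ converging to $0$ together with their derivatives, so the leaves of $\mathcal{G}$ form a locally trivial non-singular foliation. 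If you want to salvage your approach, the quantity to control is not $d$ but the intersection numbers of the reduced cycles $Y_c$ with translated $p$-planes, which is exactly the route the paper takes.
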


\begin{proof}
Choose local coordinates $y_{1},\ldots, y_{n}$ in a small polydisk $D$ centered at the origin such that
$$ Y_{0} = \{ y_{1} = \ldots = y_{p} =0 \} $$ and assume that the foliation $\mathcal{G}$ is defined in $D$. Let
$$ \pi: \mathbb{C}^{n} \to \mathbb{C}^{n-p} $$
be the canonical projection onto the last $(n-p)$ coordinates.

\vspace{1ex}

We begin by showing that, for parameters $c$ close enough to $0 \in \mathbb{C}^{p}$, the level sets $Y_{c}$ are smooth and the restrictions $\pi|Y_{c}$ are injective. This follows from Proposition~\ref{level} about the continuous dependence of the level sets on parameter, and from continuity of analytic intersections (cf.~\cite[Corollary~12.3.4]{Ch} and \cite[Theorem~3.6]{T}).

\vspace{1ex}

Indeed, suppose first that there exist a sequence $c_{\nu}$ of parameters tending to the origin and a sequence $b_{\nu}$ of singular points of $Y_{c_{\nu}}$ tending to a point $b_{0} \in Y_{0}$. Take a vector space $L \subset \mathbb{C}^{n}$ of dimension $p$ such that each affine line $b_{\nu} + L$ and $Y_{c_{\nu}}$ intersect properly. By continuity of analytic intersections, the intersection index of the intersection of $b_{0} + L$ and $Y_{c_{0}}$ at $b_{0}$ coincides with that of $b_{\nu} + L$ and $Y_{c_{\nu}}$ at $b_{\nu}$ for $\nu$ large enough. We thus get a contradiction, since the first index equals $1$ and the others are greater than $1$. Hence the first conclusion holds.

\vspace{1ex}

Next suppose that there exist a sequence $c_{\nu}$ of parameters tending to the origin and a sequence $a_{\nu}$ of $Y_{0}$ tending to a point $a_{0} \in Y_{0}$ such that the intersections of $a_{\nu} + L$ and $Y_{c_{\nu}}$ are of cardinality greater than $1$, where
$$ L := \{ y \in \mathbb{C}^{n}: \ y_{p+1} = \ldots = y_{n} = 0 \}. $$
Similarly as before, we get a contradiction by continuity of analytic intersections. Hence the second conclusion holds.

\vspace{1ex}

Summing up, the level sets $Y_{c}$ are, for parameters $c$ closed enough to the origin, the graphs of some holomorphic functions
$$ \theta_{c}: Y_{0} \to \mathbb{C}^{p}_{y_{1},\ldots,y_{p}}. $$
After perhaps shrinking the polydisk $Y_{0}$, those functions tend in the topology of uniform convergence, together with their partial derivatives, to the zero function when $c$ tends to the origin. Hence the level sets
$Y_{c}$ are non-singular leaves of the foliation $\mathcal{G}$. Therefore $\mathcal{G}$ is a non-singular analytic  foliation in the vicinity of the origin, as desired.
\vspace{1ex}

\end{proof}

The converse being clear, we immediately obtain the following

\begin{corollary}\label{non-sing}
The preimage $(Y,0)$ of the germ $(X,0)$ is smooth at $0 \in \mathbb{C}^{n}_{y}$ iff the reduced pullback $\mathcal{G} = \phi^{\#}(\mathcal{F}_{X})$ is a non-singular foliation near $0 \in \mathbb{C}^{n}_{y}$.   \hspace*{\fill} $\Box$
\end{corollary}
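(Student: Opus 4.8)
The plan is to deduce the corollary from Proposition~\ref{basic} by handling the two implications of the equivalence separately; essentially all the content is carried by that proposition, and only the (easy) converse requires a separate word.

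For the implication ``$(Y,0)$ smooth $\Rightarrow \mathcal{G}$ non-singular near $0$'' I would just invoke Proposition~\ref{basic}. Under the standing assumptions of this section --- $\phi$ a germ of a finite holomorphic map and $X$ a geometric complete intersection --- the extra hypothesis that $(Y,0)$ is non-singular places us precisely in the setting of the pullback problem, so the proposition applies verbatim and gives that $\mathcal{G} = \phi^{\#}(\mathcal{F}_X)$ is a non-singular analytic foliation in a neighbourhood of $0$. Nothing further is needed in this direction.

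For the converse, ``$\mathcal{G}$ non-singular near $0$ $\Rightarrow (Y,0)$ smooth'', I would argue via the holomorphic Frobenius theorem. Assume $0 \notin \mathrm{Sing}(\mathcal{G})$. Then, near $0$, the involutive coherent subsheaf $\mathcal{G} \subset \Omega$ is a rank-$p$ subbundle, so there are local coordinates $(u_1,\ldots,u_p,v_1,\ldots,v_{n-p})$ centred at $0$ in which $\mathcal{G} = \langle du_1,\ldots,du_p\rangle$ and the leaves are the plaques $\{u = \mathrm{const}\}$. The point to exploit is that $\mathcal{G}$ contains each differential $dg_i$ of $g_i := f_i \circ \phi$: since $\mathcal{F}_X$ is the reduction of $\langle df_1,\ldots,df_p\rangle$ --- and a reduction contains the sheaf it reduces --- we have $df_i \in \mathcal{F}_X$, hence $dg_i = \phi^{*}(df_i) \in \phi^{*}(\mathcal{F}_X) \subset \mathcal{G}$, the last inclusion again because $\mathcal{G}$ is the reduction of $\phi^{*}(\mathcal{F}_X)$. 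Reading off the $dv$-components of the $dg_i$ in this frame, it follows that each $g_i$ depends on $u$ alone, so that, as a reduced set germ at the origin,
$$ Y = Y_0 = \{\, g = 0 \,\} = \{\, u \in \mathbb{C}^{p} : g(u) = 0 \,\} \times \mathbb{C}^{n-p}_{v}. $$
Since $\phi$ is finite we have $\dim Y = \dim X = n-p$, so the first factor is a zero-dimensional analytic germ in $\mathbb{C}^{p}$, i.e.\ set-theoretically just the point $0$; therefore $Y$ coincides near the origin with the linear subspace $\{0\} \times \mathbb{C}^{n-p}_{v}$ and is smooth. (Equivalently: near the regular point $0$ the level set $Y_0$ of $\mathcal{G}$ is a single leaf, hence an $(n-p)$-dimensional submanifold.)

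The only point with any real content --- and thus the ``main obstacle'', modest as it is --- is the claim that a reduction contains the sheaf it reduces, so that $dg_i$ really is a section of $\mathcal{G}$. From the $p$-form viewpoint this is quick: writing $df_1 \wedge \ldots \wedge df_p = h\,\omega_X$, the wedge $df_i \wedge (df_1 \wedge \ldots \wedge df_p)$ vanishes identically, so $h\,(df_i \wedge \omega_X) = 0$ in the torsion-free module $\bigwedge^{p+1}\Omega$, whence $df_i \wedge \omega_X = 0$; pulling back by $\phi$ and dividing out the common factor of $\phi^{*}\omega_X$ gives the analogous $dg_i \wedge \omega_Y = 0$. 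This is routine, and all the conceptual work of the corollary already resides in Proposition~\ref{basic}.
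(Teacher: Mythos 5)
Your proof is correct and takes essentially the same route as the paper: the forward implication is exactly Proposition~\ref{basic}, and the paper dismisses the converse as clear, which is precisely the easy argument you spell out (from $dg_{i} \wedge \omega_{Y} = 0$ and non-singularity of $\mathcal{G}$ one gets that $Y$ is locally a union of plaques, and the dimension count forces it to be a single smooth leaf). No discrepancy.
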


The following example indicates that in the above proposition also essential is the assumption that $\mathcal{G}$ is the reduced pullback $\phi^{\#}(\mathcal{F}_{X})$.

\begin{example}\label{ex-Y}
Consider the following family of analytic level sets $Y$ at $0 \in \mathbb{C}^{3}$:
$$ Y := \{ y \in \mathbb{C}^{3}: \ g_{1}(y) = y_{1}=c_{1}, \ g_{2}(y) = y_{3}^{3} - y_{2}^{2}\,  y_{1} = c_{2} \} $$
and the induced singular foliation $\mathcal{G}$ generated by the analytic 1-forms:
$$ dg_{1} = dy_{1}, \ \ dg_{2}= 3 y_{3}^{2}\, dy_{3} - 2 y_{2} y_{1}\, dy_{2} - y_{1}^{2} \, dy_{1}. $$
Then the singular locus of $\mathcal{G}$ is the union of two axes $Oy_{1}$ and $Oy_{2}$, thus of codimension 2, though the (smooth) level set $Y_{0}$ is the axis $Oy_{2}$.  \hspace*{\fill} $\Box$
\end{example}

\section{Proof of the main theorem}

\vspace{1ex}

We shall prove Theorem~\ref{back} by reductio ad absurdum, assuming that there exists a singular complex analytic germ $(X,0)$ in $\mathbb{C}^{n}$ being a geometric complete intersection, whose preimage $(Y,0)$, taken with the reduced structure, is smooth. Moreover, we shall proceed with induction on the dimension of the ambient space, by lowering that dimension until the induced foliation $\mathcal{F}_{X}$ has an isolated singularity. In that eventual case, a direct proof will be given.

\vspace{1ex}

To this end, we shall apply the local complex analytic version of Kleiman's theorem from Remark~\ref{Kle-an} to the general affine group $G$, which is the semidirect product
$$ G = \mathrm{Aff}(n,\mathbb{C}) = \mathrm{GL}(n,\mathbb{C}) \rtimes \mathbb{C}^{n} $$
of the additive group $\mathbb{C}^{n}$ of translations of $\mathbb{C}^{n}$ and the general linear group $\mathrm{GL}(n,\mathbb{C})$. The group $G$ acts transitively on $\mathbb{C}^{n}$ so that
$$ \mathbb{C}^{n} = \mathbb{A}^{n}(\mathbb{C}) \cong \mathrm{Aff}(n,\mathbb{C})/\mathrm{GL}(n,\mathbb{C}). $$
Also applied will be
R\"{u}ckert's descriptive lemma (cf.~\cite[Chapter~III, \S~3]{Loj}) and a transversality argument (similar in fact to the one behind the parametric transversality theorem; see e.g.~\cite[Chapter~2, \S~3]{G-P}).


\vspace{1ex}

Now suppose that $\dim \, \mathrm{Sing}\, (\mathcal{F}_{X}) \geq 1$. We encounter two cases:

I. either $\dim \, \mathrm{Sing}\, (X) \geq 1$; or

II. the level set $X$ is an isolated complete intersection singularity.

\vspace{1ex}

In case~I, we can find by means of the above mentioned analytic version of Kleiman's, via routine dimension calculus, a nowhere dense analytic subset $V$ of $\mathrm{Sing}\, (X)$ such that the following holds. For each point
$$ a \in \mathrm{Sing}\, (X) \setminus V $$
there exists a Zariski open and dense subset $\Lambda (a)$ of the set of all linear hyperplanes in $\mathbb{C}^{n}$ such that for every $H \in \Lambda (a)$ the intersection
$X \cap (H + a)$ is a singular, irreducible, reduced, geometric complete intersection germ at $a$, and the preimage
$$  \widetilde{H}(a) := \phi^{-1}(H + a) \ \ \text{and} \ \ Y \cap \widetilde{H}(a) = \phi^{-1}(X \cap (H + a)) $$
is smooth; the irreducibility of the intersection follows immediately from that its preimage is smooth. Observe here that, having disposed of the irreducibility at $a$ of the intersection $X \cap (H +a)$, its singularity can be ensured by the generic condition for the affine hyperplane $H + a$ to contain a generic direction of the primitive element from R\"{u}ckert's descriptive lemma for the germ $X$ at $a$.

\vspace{1ex}

In case~II, the germ $X$ at $0 \in \mathbb{C}^{n}_{x}$ has an isolated singularity, and thus the map $f$ defines an isolated complete intersection singularity at $0 \in \mathbb{C}^{n}_{x}$ in the sense of~\cite[Section~1.B]{Loo}. By~\cite[Theorem~2.8]{Loo}, the map $f$ defines an isolated complete intersection singularity (or is non-singular) at every point $a$ in the vicinity of the origin.

\vspace{1ex}

Observe first that then the (even scheme-theoretical) intersection of the level set $X_{c}$, where $c=f(a)$, with a generic affine hyperplane $H +a$ is an isolated complete intersection singularity at $a$; and a fortiori it is reduced ({\em op.cit.}, Proposition~1.10). We outline the proof for $c=0$. Consider a linear projection $\vartheta: \mathbb{C}^{n}_{x} \to \mathbb{C}^{2}$ and its restriction
$$ \vartheta_{0}: X_{0} \setminus \{ 0 \}  \to \mathbb{C}^{2}. $$
By Sard's theorem (cf.~\cite[Chapter~V, \S~1, Corollary~2]{Loj}), the set of critical values of $\vartheta_{0}$ is a subanalytic subset of $\mathbb{C}^{2}$ and a countable union of complex analytic manifolds of complex dimension $<2$. Hence its intersection with a generic line $L$ through $0 \in \mathbb{C}^{2}$ is a subanalytic subset and a complex analytic set of complex dimension $<1$, and thus it is a finite set. Then the hyperplane $H := \vartheta^{-1}(L)$ is one we are looking for.

\vspace{1ex}

As before, we can find by means of the analytic versions of Kleiman's theorem, via routine dimension calculus, a nowhere dense analytic subset $V$ of $\mathrm{Sing}\, (\mathcal{F}_{X})$ such that the following holds. For each point
$$ a \in \mathrm{Sing}\, (\mathcal{F}_{X}) \setminus V $$
there exists a Zariski open and dense subset $\Lambda (a)$ of the set of all linear hyperplanes in $\mathbb{C}^{n}$ such that for every $H \in \Lambda (a)$ the preimage
$$  \widetilde{H}(a) := \phi^{-1}(H + a) $$
is a smooth analytic hypersurface. Take a point $a \in \mathrm{Sing}\, (\mathcal{F}_{X}) \setminus V$ and a point $b$ with $\phi(b)=a$. Clearly, the set of tangent spaces
$$ \left\{ T_{b}\widetilde{H} \subset \mathbb{C}^{n}: \ H \in \Lambda (a) \right\} $$
is a Zariski open and dense subset of the set of all linear hyperplanes in $\mathbb{C}^{n}$. Therefore some (even generic) of those tangent spaces, say $T_{b}\widetilde{H}$ with $H \in \Lambda^{\prime}(a)$, meet transversally the linear subspace
$$ \{ y_1 = \ldots = y_p = 0 \}. $$
Consequently, the preimages $\widetilde{H}(a)$ with $H \in \Lambda^{\prime} (a)$ meet transversally at $b$ the level set $Y_{c} = \phi^{-1}(X_{c})$ through $b$, where $c = g(b) = f(a)$. Therefore the germs at $b$ of the preimages
$$ \phi^{-1}(X_{c} \cap (H + a)) = \phi^{-1}(X_{c}) \cap \phi^{-1}(H + a) = Y_{c} \cap \widetilde{H}(a) \subset \widetilde{H}(a) $$
are smooth germs at $b$ both in $Y_{c}$ and in $\widetilde{H}(a)$ for all $H \in \Lambda^{\prime}(a)$.

\vspace{1ex}

Hence the intersections $X_{c} \cap (H + a)$ are irreducible germs at $a$ for all $H \in \Lambda^{\prime}(a)$.
On the other hand, it follows immediately from Proposition~\ref{basic} (applied to the foliation $\mathcal{F}_{X}$) that, given any point $a \in \mathrm{Sing}\, (\mathcal{F}_{X})$, the analytic level $X_{c}$, $c =f(a)$, is singular at $a$. Therefore again, the singularity of the intersection germs $X_{c} \cap (H + a)$ can be ensured by the generic condition for the hyperplane $H$ to contain a generic direction of the primitive element from R\"{u}ckert's descriptive lemma for the germ $X_{c}$ at $a$.

\vspace{1ex}

Summing up, we can assume that the germs $X_{c} \cap (H + a)$ at $a$, as above, are also singular, irreducible, geometric complete intersection germs at $a$.
Under the circumstances, the pullback problem under study comes down to the case where the new ambient space $H$ is of lower dimension $(n-1)$.
This allows us to proceed with induction on the dimension of the ambient space until the foliation $\mathcal{F}_{X}$ has an isolated singularity. In this manner, we are reduced to the following special case of the main theorem

\begin{proposition}\label{main-red}
Theorem~\ref{back} holds when the induced foliation $\mathcal{F}_{X}$ has an isolated singularity.
\end{proposition}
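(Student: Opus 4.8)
The plan is a reductio ad absurdum. I would suppose that $X$ is singular at $0$; having performed the reductions of the preceding part of this section, I may assume in addition that $X$ is a singular, irreducible, reduced geometric complete intersection whose induced foliation $\mathcal{F}_{X}$ has an isolated singularity at the origin, and that $\dim X = n-p \geq 2$. Writing $df_{1} \wedge \ldots \wedge df_{p} = h\, \omega_{X}$ with $\omega_{X}$ the reduction and $h$ the common holomorphic factor, the full critical set $V = \{ df_{1} \wedge \ldots \wedge df_{p} = 0\}$ is then the union of the divisor $\{ h = 0\}$ and the remaining locus $\mathrm{Sing}\,(\mathcal{F}_{X}) = \{ 0\}$, and $\mathrm{Sing}\,(X) = V \cap X = \{ h = 0\} \cap X$; this divisorial part is precisely the piece of the singularity of $X$ that the reduced foliation $\mathcal{F}_{X}$ cannot detect, and $X$ being singular forces $h$ not to be a unit.

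Next I would use Proposition~\ref{basic} and Corollary~\ref{non-sing}: the reduced pullback $\mathcal{G} = \phi^{\#}(\mathcal{F}_{X})$ is non-singular near $0$, so by the Frobenius theorem there are coordinates $y = (y_{1},\ldots,y_{n})$ in which the leaves of $\mathcal{G}$ are the slices $\{ y_{1} = c_{1}, \ldots, y_{p} = c_{p}\}$, with $Y = \{ y_{1} = \ldots = y_{p} = 0\}$ the leaf through $0$. Put $y' = (y_{1},\ldots,y_{p})$ and $g_{i} = f_{i} \circ \phi$. Since $\phi$ carries $\mathcal{G}$-leaves into $\mathcal{F}_{X}$-leaves, the map $g = (g_{1},\ldots,g_{p})$ is constant along the $\mathcal{G}$-leaves and hence factors as $g = G \circ y'$ for a finite map germ $G : (\mathbb{C}^{p},0) \to (\mathbb{C}^{p},0)$; thus the scheme $\phi^{-1}(X) = \{ g = 0\}$ is the fattening $(y')^{*}(G^{-1}(0))$ of the smooth germ $Y$ by the finite length of $G^{-1}(0)$.

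The heart of the argument, which I would tackle next, is to show that $h$ must after all be a unit — equivalently that $df_{1} \wedge \ldots \wedge df_{p}$ vanishes only at $0$, so that $f$ is a submersion off the origin and $X = f^{-1}(0)$ is smooth. Here I would invoke Saito's theorem (Theorem~\ref{RS}) for the $1$-forms $df_{1},\ldots,df_{p}$ with the Jacobian ideal $\mathfrak{j}$ of $X$, obtaining an $m$ with $\mathfrak{j}^{m} H^{q} = 0$, so that the syzygies of $df_{1},\ldots,df_{p}$ are controlled away from $V(\mathfrak{j}) = \{ h=0\} \cup \{ 0\}$, and likewise upstairs for $dg_{1},\ldots,dg_{p}$. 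By result~1) of Section~1, $\phi$ being finite makes $\mathbb{C}\{ y\}$ a free $\mathbb{C}\{ x\}$-module; pulling the downstairs syzygy module back through $\phi$ and confronting it with the non-singularity of $\mathcal{G}$ — which forces the common factor of $dg_{1} \wedge \ldots \wedge dg_{p}$ to be exactly the part of the ramification divisor $Z$ of $\phi$ lying over $\{ h = 0\}$ — I expect to be driven to the conclusion that every $g_{i}$, and hence (through $g = G \circ y'$ with $G$ finite) every component of $\phi$ restricted over $\{ h = 0\}$, is divisible by the relevant local equation of $Z$; this would make $\phi$ contract the hypersurface $\phi^{-1}(\{ h = 0\})$ onto a lower-dimensional set, contradicting finiteness of $\phi$. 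The resulting contradiction forces $h$ to be a unit, whence $X$ is smooth.

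The main obstacle is exactly this last step: extracting from the non-singularity of the \emph{reduced} pullback $\mathcal{G}$ — which has had the ramification-carrying common factor divided away — the conclusion that no genuine common factor was present downstairs. This is where Saito's \emph{quantitative} annihilation $\mathfrak{j}^{m}H^{q} = 0$, rather than a bare vanishing statement, must be combined with the finiteness of $\phi$ (and, through the earlier reductions, with Kleiman's theorem and the continuity of intersection cycles); and where one must ensure that the Frobenius first integral $y'$ and the factorization $g = G \circ y'$ are available on one fixed polydisc, which is exactly what Proposition~\ref{basic} guarantees.
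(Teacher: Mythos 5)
Your proposal is aimed at the wrong target, and its decisive step is missing. You reduce everything to showing that the common holomorphic factor $h$ in $df_{1}\wedge\ldots\wedge df_{p}=h\,\omega_{X}$ is a unit, asserting both that ``$X$ being singular forces $h$ not to be a unit'' and that $h$ being a unit would make $X=f^{-1}(0)$ smooth. Both assertions fail: for the quadric cone $X=\{x_{1}^{2}+x_{2}^{2}+x_{3}^{2}=0\}\subset\mathbb{C}^{3}$ one has $df=2(x_{1}\,dx_{1}+x_{2}\,dx_{2}+x_{3}\,dx_{3})$, whose coefficients have unit greatest common divisor, so $h$ is a unit and $f$ is a submersion off the origin, yet $X$ is singular at $0$. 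Likewise your identity $\mathrm{Sing}\,(X)=\{h=0\}\cap X$ is false there. What has to be contradicted is not ``$h$ is a non-unit'' but the vanishing $\omega_{X}(0)=0$ of the \emph{reduced} form, i.e.\ the non-emptiness of $\mathrm{Sing}\,(\mathcal{F}_{X})=\{0\}$; that is the only thing standing between the hypothesis and smoothness of $X$ once $X$ is irreducible and the foliation has an isolated singularity.

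Concerning the missing step: your last paragraph candidly says you ``expect to be driven to the conclusion'' that the components of $\phi$ over $\{h=0\}$ are divisible by an equation of the ramification divisor, but no mechanism is given, and in the isolated-singularity case $\{h=0\}$ may well be empty, so there is nothing to contract. The engine the paper uses, and which is absent from your proposal, is Proposition~\ref{key}: the Jacobian determinant of a finite map germ $(\mathbb{C}^{p},0)\to(\mathbb{C}^{p},0)$ does not lie in the ideal generated by its components. The actual chain is: non-singularity of $\mathcal{G}$ (Proposition~\ref{basic}) forces all coefficients $b_{J}$ of $\phi^{*}\omega_{X}$ with $J\neq J_{0}$ to vanish identically, which is encoded as $\tau_{J}\wedge\eta=0$ with $\eta=\widetilde{df_{1}}\wedge\ldots\wedge\widetilde{df_{p}}$; Saito's Theorem~\ref{RS} is then applied upstairs, with the ideal $\mathfrak{a}$ generated by the coefficients of $\eta$ (whose zero locus is $\phi^{-1}(\mathrm{Sing}\,(\mathcal{F}_{X}))=\{0\}$ by finiteness of $\phi$ --- not with a Jacobian ideal of $X$ downstairs as you suggest), yielding $y_{s}^{N}\,\partial\phi_{I}/\partial y_{J}\in(\phi_{1},\ldots,\phi_{n})$ for all $J\neq J_{0}$; finally, restricting $\phi$ to a suitable $p$-dimensional coordinate slice produces a finite map $\psi$ whose Jacobian lies in $(\psi_{1},\ldots,\psi_{p})$, contradicting Proposition~\ref{key}. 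Without that last input, or a substitute for it, the argument does not close.
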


\section{Proof of Proposition~\ref{main-red}}

\vspace{1ex}

Again we proceed with reductio ad absurdum. Suppose thus that $X$ is a geometric complete intersection at $0 \in \mathbb{C}^{n}$ of codimension $p \leq n-2$ (cf.~Remark~\ref{dim}), that its preimage $Y = \phi^{-1}(X)$ (with the reduced structure) is a smooth analytic germ, and the induced foliation $\mathcal{F}_{X}$ has an isolated singularity. This foliation is determined in a neighbourhood $U$ of the origin by the reduction $\omega_{X}$ of the associated differential $p$-form
$df_{1} \wedge \ldots \wedge df_{p}$.

\vspace{1ex}

Proposition~\ref{basic} states that the reduced pullback $\mathcal{G} = \phi^{\#}(\mathcal{F}_{X})$ is a non-singular foliation, and thus is determined by an associa\-ted differential $p$-form $\omega_{Y}$ which is in some local coordinates $y$ of the form
$$ \omega_{Y} = dy_{1} \wedge \ldots \wedge dy_{p}. $$
As noted in Section~3, the $p$-form $\omega_{Y}$ is the reduction of the pullback
$$ \phi^{*}\omega_{X} = \phi^{*}(df_{1} \wedge \ldots \wedge df_{p}). $$
Following an idea of Giraldo--Roeder~\cite{GR}, we shall consider some differential forms $\eta$ and $\tau_{J}$ defined below.
Write
$$ \omega_{X} = \sum_{I} \, a_{I}(x) \, dx_{I}, \ \ dx_{I} = dx_{i_1} \wedge \ldots \wedge dx_{i_p}, $$
where
$$ I = (i_1,\ldots,i_p), \ \ i_1 < \ldots < i_p, \ \ \{i_1, \ldots, i_p \} \subset \{ 1,\ldots,n \}. $$
Then
$$ \phi^{*}\omega = \sum_{I} \sum_{J} \, a_{I}(\phi(y)) \, \frac{\partial \phi_{I}}{\partial y_{J}} \, dy_{J} = \sum_{J} \, b_{J}(y) \, dy_{J},  $$
where
$$ J = (j_1,\ldots,j_p), \ \ j_1 < \ldots < j_p, \ \ \{j_1, \ldots, j_p \} \subset \{ 1,\ldots,n \} $$
and
$$ b_{J}(y) = \sum_{I} \, a_{I}(\phi(y)) \, \frac{\partial \phi_{I}}{\partial y_{J}}. $$
Then we of course have $b_{J}(y) \equiv 0$ for all $J \neq J_{0} := ( 1,\ldots,p )$. Put
$$ \eta := \Sigma_{I} \, a_{I}(\phi(y)) \, dy_{I}  \ \ \ \text{and} \ \ \ \tau_{J} := \sum_{I} \, \pm \, \frac{\partial \phi_{I}}{\partial y_{J}} \, dy_{\widehat{I}}\, , $$
choosing an appropriate sign so as to get
$$ \tau_{J} \wedge \eta = 0 \ \ \ \text{for all} \ \ \ J \neq J_{0}; $$
here $\widehat{I}$ is the sequence of elements $\{ 1,\ldots,n \} \setminus I$ in ascending order. Clearly, we have
$$ \eta = \widetilde{df_{1}} \wedge \ldots \wedge \widetilde{df_{p}}, $$
where the 1-forms $\widetilde{df_{1}},\ldots,\widetilde{df_{p}}$ are obtained from the 1-forms $df_{1},\ldots,df_{p}$ by substitution $\phi(y), dy_{1},\ldots,dy_{n}$ for $x,dx_{1},\ldots,dx_{n}$.

\vspace{1ex}

Denote by $\mathfrak{a}$ the ideal generated by the coefficients of the $p$-form $\eta$. Since
$$ \mathrm{Sing}\, (\mathcal{F}_{X}) = \{ x \in U: \omega(x) =0 \} = \{ 0 \} \subset \mathbb{C}^{n}_{x} $$
the zero locus of the ideal $\mathfrak{a}$ is the singleton $\{ 0 \} \subset \mathbb{C}^{n}_{y}$.

\vspace{1ex}

It follows directly from Theorem~\ref{RS} that
$$ \mathfrak{a}^{m} \cdot \tau_{J} \in \sum_{i=1}^{p} \, \widetilde{df_{i}} \wedge \Omega^{n-p-1} \ \ \ \text{for all} \ \ \ J \neq J_{0} $$
and for some non-negative integer $m$.
Hence it follows immediately that
$$ y_{s}^{N} \cdot \tau_{J} \in \sum_{i=1}^{p} \, \widetilde{df_{i}} \wedge \Omega^{n-p-1} \subset \sum_{j=1}^{n} \phi_{j} \cdot \Omega^{n-p} $$
for all $s \in \{ 1,\ldots,n \}$, $J \neq J_{0}$ and for some non-negative integer $N$.

\vspace{1ex}

To complete the proof, we shall show that the belonging
\begin{equation}\label{belonging}
  y_{s}^{N} \cdot \tau_{J} \in \sum_{j=1}^{n} \phi_{j} \cdot \Omega^{n-p}, \ \ \ k=1,\ldots,n,
\end{equation}
leads to a contradiction. To this end, we still need the following

\begin{proposition}\label{key}
Given a finite holomorphic map germ
$$ \psi : (\mathbb{C}^{p}_{y}, 0) \to (\mathbb{C}^{p}, 0), $$
the Jacobian determinant
$$ J := \frac{\partial (\psi_{1},\ldots,\psi_{p})}{\partial
   (y_{1},\ldots,y_{p})} \not \in \sum_{i=1}^{p} \, \psi_{i} \, \mathcal{O}
$$
does not belong to the ideal generated by the components of the map $\psi$.    \hspace*{\fill} $\Box$
\end{proposition}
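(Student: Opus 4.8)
The plan is to produce an explicit $\mathbb{C}$-linear functional $\lambda$ on the local algebra $Q_{\psi} := \mathcal{O}_{\mathbb{C}^{p}_{y},0}/(\psi_{1},\ldots,\psi_{p})$ that kills the ideal $(\psi_{1},\ldots,\psi_{p})$ but not $J$; this at once yields $J\notin\sum_{i}\psi_{i}\,\mathcal{O}$. The natural candidate is the Grothendieck point residue
$$ \lambda(g)\ :=\ \mathrm{Res}_{0}\!\left[\,\frac{g\,dy_{1}\wedge\cdots\wedge dy_{p}}{\psi_{1}\cdots\psi_{p}}\,\right], $$
which is defined because the finiteness of $\psi$ forces $\{\psi_{1}=\cdots=\psi_{p}=0\}=\{0\}$. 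I would first record that, in characteristic $0$, a finite germ is generically a local biholomorphism: if $J\equiv 0$ the rank theorem would make the image of $\psi$ thin, contradicting the Remmert open mapping theorem. Hence $C:=\{J=0\}$ and $D:=\psi(C)$ are proper analytic subsets, and for a sequence of regular values $w^{(\nu)}=(w^{(\nu)}_{1},\ldots,w^{(\nu)}_{p})\to 0$ lying off $D$, the fibre $\psi^{-1}(w^{(\nu)})$ consists of exactly $d:=\deg_{0}\psi\ (\geq 1)$ distinct points $a^{(\nu)}_{1},\ldots,a^{(\nu)}_{d}$, none of them on $C$, so that $J(a^{(\nu)}_{k})\neq 0$.

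The heart of the matter is a deformation formula for $\lambda$. Fixing once and for all a small polydisc $V\ni 0$ and a cycle $\Gamma=\{y\in V:\ |\psi_{i}(y)|=\varepsilon_{i},\ i=1,\ldots,p\}$, the multidimensional residue theorem together with the continuity in $w$ of $(2\pi i)^{-p}\int_{\Gamma}\frac{g\,dy}{\prod_{i}(\psi_{i}-w_{i})}$ gives, upon letting $w\to 0$,
$$ \lambda(g)\ =\ \lim_{\nu\to\infty}\ \sum_{k=1}^{d}\ \frac{g\bigl(a^{(\nu)}_{k}\bigr)}{J\bigl(a^{(\nu)}_{k}\bigr)}\,, $$
since at each simple point $a^{(\nu)}_{k}$ the Jacobian of $(\psi_{1}-w^{(\nu)}_{1},\ldots,\psi_{p}-w^{(\nu)}_{p})$ equals $J(a^{(\nu)}_{k})\neq 0$. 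Two consequences finish the proof. Taking $g=\psi_{j}h$ and using $\psi_{j}(a^{(\nu)}_{k})=w^{(\nu)}_{j}$ on the fibre, we get $\lambda(\psi_{j}h)=\lim_{\nu} w^{(\nu)}_{j}\sum_{k}h(a^{(\nu)}_{k})/J(a^{(\nu)}_{k})=0$, so $\lambda$ factors through $Q_{\psi}$; taking $g=J$, we get $\lambda(J)=\lim_{\nu} d=d\neq 0$. Hence $[J]\neq 0$ in $Q_{\psi}$, i.e.\ $J\notin\sum_{i}\psi_{i}\,\mathcal{O}$.

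The residue facts invoked — vanishing on the ideal and the conservation/change-of-variables law for the point residue — are standard (see e.g.\ Griffiths--Harris or Tsikh), and in the concrete form above they reduce to continuity of an integral over the fixed cycle $\Gamma$ plus the one-variable residue theorem iterated coordinatewise. The one point that needs care, and which I regard as the main obstacle, is the bookkeeping behind the passage to the limit: $V$ and $\Gamma$ must be chosen so that every nearby fibre $\psi^{-1}(w)$ lies inside $\{|\psi_{i}|<\varepsilon_{i}\}$ with its full count $d$ (conservation of number for a proper representative of $\psi$, which one may also read off from the flatness of $\mathcal{O}_{\mathbb{C}^{p}_{y},0}$ over $\psi^{*}\mathcal{O}_{\mathbb{C}^{p},0}$ supplied by miracle flatness), and then $w^{(\nu)}$ must be steered away from the discriminant $D$ so that all $d$ points of the fibre are simple and off $C$. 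Once this is arranged the contradiction is immediate. Equivalently, one may argue purely ring-theoretically: $Q_{\psi}$ is an Artinian complete intersection, hence Gorenstein with one-dimensional socle, and $[J]$ generates that socle (Scheja--Storch), so in particular $[J]\neq 0$; the functional $\lambda$ above is precisely the generator of $\mathrm{Hom}_{\mathbb{C}}(Q_{\psi},\mathbb{C})$ realizing this self-duality.
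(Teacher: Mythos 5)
The paper offers no proof of Proposition~\ref{key} at all: it defers to Theorem~1 of Chapter~5, Section~11 of Arnold--Gusein-Zade--Varchenko (Khovanskii's argument via the trace of a differential form under a finite map, in the spirit of Kunz's K\"ahler differentials) and to Zannier's algebraic counterpart. Your argument is a correct, self-contained alternative along classical lines: you exhibit the Grothendieck point residue $\lambda(g)=\mathrm{Res}_{0}\bigl[g\,dy/(\psi_{1}\cdots\psi_{p})\bigr]$ as a linear functional killing the ideal $(\psi_{1},\dots,\psi_{p})$ but with $\lambda(J)=\deg_{0}\psi\neq 0$, which settles the claim. The two non-formal inputs --- holomorphy (hence continuity) in $w$ of the integral over the fixed cycle $\Gamma$, and the identification, for $w$ off the discriminant, of that integral with the sum of the $d$ simple local residues $g(a_{k})/J(a_{k})$ --- are standard multidimensional residue theory (Griffiths--Harris, Aizenberg--Yuzhakov, Tsikh), and you correctly isolate the only delicate bookkeeping, namely choosing a proper representative of $\psi$ so that conservation of number holds and steering $w^{(\nu)}$ off the discriminant; your preliminary observation that $J\not\equiv 0$ (via openness of finite equidimensional maps) is also needed and correctly handled. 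Conceptually your functional is the analytic incarnation of the trace pairing underlying Khovanskii's proof, so the two routes are close cousins; what your version buys is that the one genuinely subtle step of the trace approach (holomorphy of the trace across the discriminant) is replaced by continuity of a fixed contour integral. The closing remark that $[J]$ spans the socle of the Gorenstein algebra $Q_{\psi}$ is more than is needed, but correct, and would equally well finish the proof.
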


For the proof of the above proposition, we refer the reader to the book~\cite{A-GZ-V}, Theorem~1 from Section~11 of Chapter~5, which was written by A.G.~Hovansky. His proof is strictly connected with the concept of the trace under a finite map of a differential form, which goes back to Kunz's theory of K\"{a}hler differentials (cf.~\cite{Kunz}).  A purely algebraic counterpart of this subtle result can be found in~\cite{Zan}. Nevertheless, its proof is to some extent of geometric nature, leading via some commutative algebra to the generic case where the algebraic objects under study are in a sense universal for the problem.

\vspace{1ex}

So suppose now that formula~\ref{belonging} holds. This means that
$$ y_{s}^{N} \cdot \frac{\partial \phi_{I}}{\partial y_{J}}  \in \sum_{i=1}^{n} \, \phi_{i} \, \mathcal{O} $$
for all $I$, $J \neq J_{0}$ and $s \in \{ 1,\ldots,n \}$. Since we have assumed at the start of the proof that $p \leq n-1$, we may take for instance:
$$ I := \{ 1,\ldots,p \}, \ J := \{ 2,\ldots, p+1 \}, \ s := p+2, $$
and put
$$ \psi_{i}(y_{2},\ldots,y_{p+1}) := \phi_{i}(0,y_{2},\ldots,y_{p+1},a,0,\ldots,0). $$
Then $\psi := (\psi_{1},\ldots,\psi_{p})$ is a finite holomorphic map in the vicinity of the origin, and further we get
$$  a^{N} \cdot \frac{\partial (\psi_{1},\ldots,\psi_{p})}{\partial (y_{2},\ldots,y_{p+1})} = a^{N} \cdot \frac{\partial \phi_{I}}{\partial y_{J}}(0,y_{2},\ldots,y_{p+1},a,0,\ldots,0) $$
$$ \in \sum_{i=2}^{p+1} \, \psi_{i} \, \mathbb{C} \{ y_{2},\ldots,y_{p+1} \},  $$
for $a \in \mathbb{C} \setminus \{0\}$ close to the origin.
This contradicts Proposition~\ref{key}, completing the proof of Proposition~\ref{main-red}, and thus of the main theorem as well.

\vspace{3ex}

\vspace{5ex}

\begin{small}
Institute of Mathematics

Faculty of Mathematics and Computer Science

Jagiellonian University

ul.~Profesora S.\ \L{}ojasiewicza 6

30-348 Krak\'{o}w, Poland

{\em E-mail address: nowak@im.uj.edu.pl}
\end{small}

\end{document}